\numberwithin{equation}{section}
\newcommand{\periodafter}[2]{#1{#2.}}
\newcommand*{\justifyheading}{\raggedright}
\theoremstyle{definition}
\newtheorem{definition}{Definition}[section]
\theoremstyle{remark}
\newtheorem{remark}[definition]{Remark}
\newtheorem{problem}[definition]{Problem}
\theoremstyle{plain}
\newtheorem{theorem}[definition]{Theorem}
\newtheorem{lettertheorem}{Theorem}
\newcommand{\cj}{\overline}
\newcommand{\N}{\mathbb{N}}
\newcommand{\C}{\mathbb{C}}
\numberwithin{equation}{section}
\begin{document}

\title{Chebyshev polynomials on equipotential curves}

\author{Erwin Mi\~{n}a-D\'{\i}az}
\address{Department of Mathematics,
The University of Mississippi,
Hume Hall 305, P. O. Box 1848,
MS 38677-1848, USA}
\email{minadiaz@olemiss.edu}

\author{Olof Rubin}
\address{Department of Mathematics,
KU Leuven,
Celestijnenlaan 200B,
3001 Leuven, Belgium}
\email{olof.rubin@kuleuven.be}

\date{\today} 

\subjclass[2020]{30C10, 30C20, 41A50, 30E10; 31A15}

\keywords{Chebyshev polynomials, Faber polynomials, equipotential curves}

\maketitle
\begin{abstract}For an analytic function $\phi(z)$ with a Laurent expansion at $\infty$ of the form 
	\begin{equation*}
		\phi(z)=z+c_{0}+\frac{c_{1}}{z}+\frac{c_{2}}{z^{2}}+\cdots,
	\end{equation*}
the Faber polynomial $F_n$ of degree $n$ associated to $\phi$ is the polynomial part of the Laurent series at $\infty$ of $\phi(z)^n$. We prove that the $n$th Chebyshev polynomial $T_{n,L_r}$ for the equipotential curve $L_r=\{z\in \C:|\phi(z)|=r \}$  converges to $F_n$ as $r\to\infty$. The proof makes use of the fact that zero is the strongly unique best approximation to the monomial $z^n$ on the unit circle by polynomials of degree less than $n$. 

\end{abstract}
\section{Introduction}
If $K\subset \C$ is a compact set with infinitely many points, then for every integer $n\geq 0$, there exists a unique monic polynomial $T_{n,K}$ of degree $n$ that minimizes the supremum norm over $K$:
\[
\max_{z\in K}|T_{n,K}(z)|=\inf\left\{\max_{z\in K}|a_0+a_1z+\cdots+a_{n-1}z^{n-1}+z^n|:\ a_0,\ldots,a_{n-1}\in \C\right\}.
\]
The polynomial $T_{n,K}$ is known as the $n$th-degree Chebyshev polynomial associated with the compact set $K$ and is named after Pafnuty L. Chebyshev, who originally studied the minimal polynomials on the interval $[-1,1]$; see \cite{chebyshev54}. The extension of this theory to general compact subsets of the complex plane is due to Faber \cite{faber19}. Since then, the literature on Chebyshev polynomials in the complex plane has expanded significantly, and several sources offer detailed treatments of their fundamental properties. We refer the reader to \cite{achieser56, christiansen-simon-zinchenko-I, novello-schiefermayr-zinchenko21, rivlin90, widom69} for a more extensive background treatment of Chebyshev polynomials.

If $K$ is a circle about the origin, then the Chebyshev polynomials are given by $T_{n,K}(z) = z^n$, see \cite{faber19}. If $K=[-1,1]$, then
\begin{align}\label{Chebyshev-poly}
T_{n,[-1,1]}(z)=\left(\frac{z+\sqrt{z^2-1}}{2}\right)^n+\left(\frac{z-\sqrt{z^2-1}}{2}\right)^n
\end{align}
is the classical $n$th-degree Chebyshev polynomial of the first kind \cite{chebyshev54}. 

Let $\Omega$ denote the unbounded component of $\cj{\C}\setminus K$. The Maximum Modulus Principle implies that $T_{n,K}=T_{n,\partial \Omega}$. Thus, the Chebyshev polynomials for a compact set coincide with those for its outer boundary, and if desired one may always assume that $\Omega=\cj{\C}\setminus{K}$. If $K$ has positive logarithmic capacity $\operatorname{cap}(K)>0$, then $\Omega$ has a Green function $g_{\Omega}(z,\infty)$ with pole at $\infty$, which  is uniquely characterized by the properties
\begin{enumerate}
	\item[(a)] $g_{\Omega}(z,\infty)$ is harmonic in $\Omega\setminus\{\infty\}$, and bounded outside each neighborhood of $\infty$;
		\item[(b)]  $\lim_{z\to\infty} g_{\Omega}(z,\infty) - \log |z|=-\log \operatorname{cap}(K)$;
	\item[(c)]  $g_{\Omega}(z,\infty)\to 0$ as $z\to \zeta\in \partial\Omega$, for nearly every $\zeta\in \partial \Omega$. 
\end{enumerate}
We refer the reader to \cite{garnett-marschall05, ransford95} for details on Green functions, logarithmic capacity and other related notions in potential theory.

Our focus will be directed toward Chebyshev polynomials corresponding to the level curves of the Green function  
\[
L_r\coloneqq\{z\in \Omega: g_{\Omega}(z,\infty)=\log r\},\qquad r>1,
\]
which are typically referred to as \textit{equipotential curves}. Because of the harmonicity of the Green function one can construct an analytic function $\phi$ on $\Omega$ such that (see e.g.  \cite[Sec. 3]{widom69})
\begin{align}
g_{\Omega}(z,\infty)=\log|\phi(z)|,\qquad z\in \Omega.
\label{green-function-map}
\end{align}
Unless $\Omega$ is simply-connected, the function $\phi$ is multiple-valued. However, $|\phi(z)|$ remains single-valued across $\Omega$, which allows us to write
\begin{align}\label{levelcurves-secondrepresentation}
L_r = \{ z \in \Omega : |\phi(z)| = r \},\qquad r>1.
\end{align}
Moreover, $\phi$ is always univalent in a neighborhood of $\infty$ with a Laurent series expansion of the form
\begin{equation}
	\phi(z) = c z + c_0 + \frac{c_{-1}}{z} + \frac{c_{-2}}{z^2} + \cdots, \qquad c = \operatorname{cap}(K)^{-1},
	\label{eq:conformal_representation-alt}
\end{equation}
so that for $r$ sufficiently large the level curve $L_r$ is an analytic Jordan curve. If in addition $\Omega$ is simply-connected, then $\phi$ is the canonical conformal map of $\Omega$ onto the exterior of the unit circle. For a pictorial illustration of how equipotential curves $L_r$ may look in relation to the compact set $K$, see Figure \ref{fig:bernoulli_equipotential}.

\begin{figure}[h!]
	\centering
	\includegraphics[width = 0.7\textwidth]{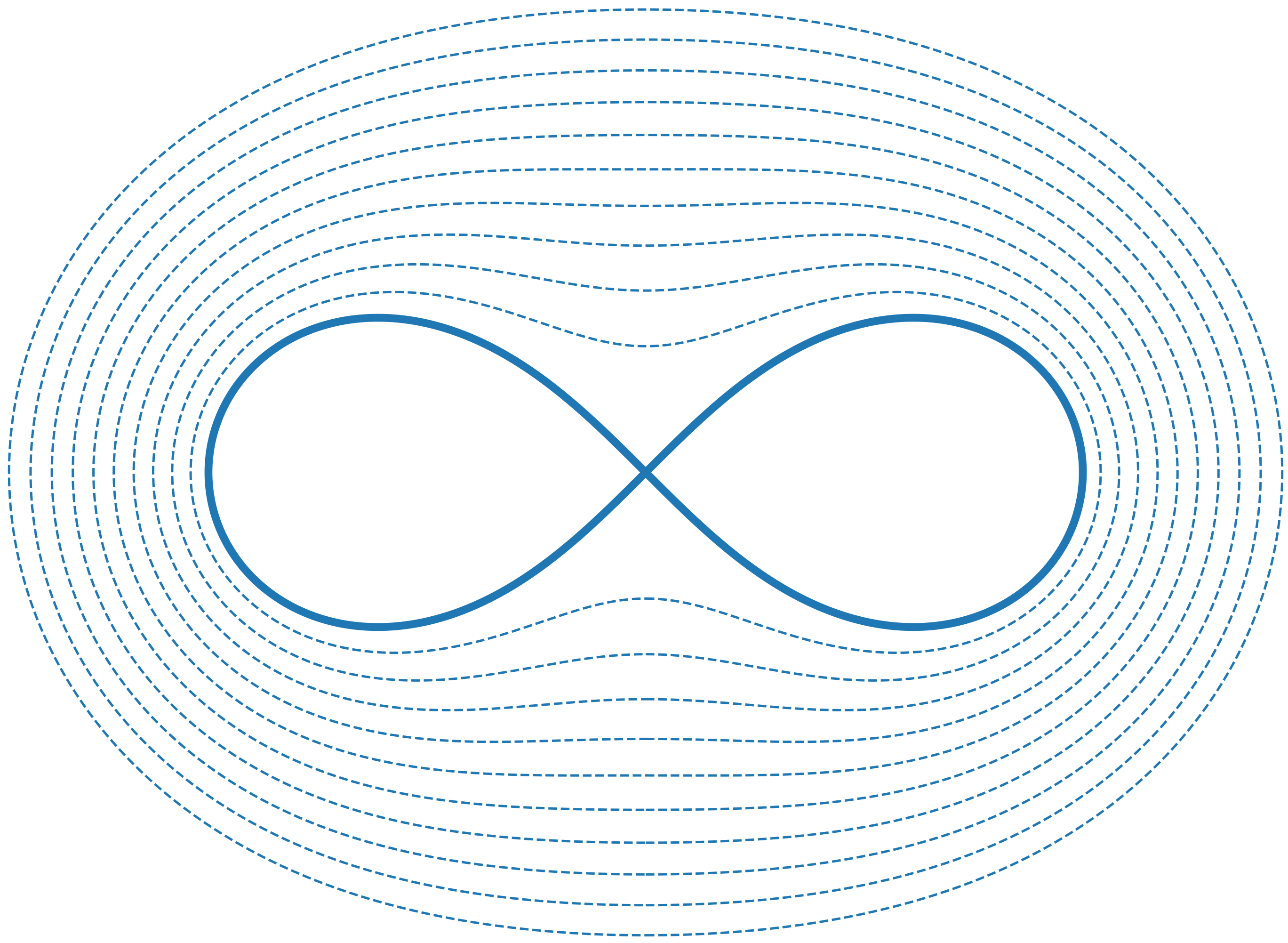}	
	\caption{The Bernoulli lemniscate $K = \{z: |z^2-1| = 1\}$ as a solid curve along with samples of associated equipotential curves $L_r = \{z:|z^2-1| = r^2\}$ for $r>1$.}
	\label{fig:bernoulli_equipotential}
\end{figure}

In certain instances, it has been observed that the Chebyshev polynomial $T_{n,L_r}$ for the level curve $L_r$ coincides with $T_{n,K}$ for all $r>1$. A simple example arises when $K$ is the unit circle, then $L_r$ is the circle about the origin of radius $r$. In this case, $T_{n,K}(z) = T_{n,L_r}(z)=z^n$ for all $r>1$. This is also true for general lemniscates.

\begin{lettertheorem}[Faber 1919 \cite{faber19}]
	Let $P$ be a monic polynomial of degree $m$ and let $R>0$. If we set
	\[K\coloneqq \{z\in \mathbb{C}: |P(z)|=R\},\]
	then 
	\[T_{nm,K}(z) = T_{nm,L_r}(z) = P(z)^{n}\]
	for all $n\in\N$ and $r>1$.
	\label{thm:faber_lemniscate}
\end{lettertheorem}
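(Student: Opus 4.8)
The plan is to reduce both claimed identities to a single assertion about lemniscates and then prove that assertion directly via the maximum modulus principle. First I would record the Green function and capacity of a lemniscate. With $\Omega$ the unbounded component of $\cj{\C}\setminus K$, I claim that
\[
g_\Omega(z,\infty)=\frac1m\log\frac{|P(z)|}{R},\qquad z\in\Omega .
\]
Since $P$ has all its zeros inside $K$, the right-hand side is harmonic in $\Omega\setminus\{\infty\}$; because $P$ is monic of degree $m$ it equals $\log|z|-\frac1m\log R+\ordo(1)$ near $\infty$; and it vanishes on $\partial\Omega\subseteq\{|P|=R\}$. By the characterization (a)--(c) this is the Green function, and comparison with (b) yields $\operatorname{cap}(K)=R^{1/m}$ together with $|\phi(z)|^m=|P(z)|/R$.

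Next I would identify the equipotential curves. As $|\phi(z)|=r$ is equivalent to $|P(z)|=Rr^m$, the level curve is itself a lemniscate of the same polynomial, $L_r=\{z:|P(z)|=Rr^m\}$. Hence both $K$ and $L_r$ have the form $K_\rho:=\{z:|P(z)|=\rho\}$ (with $\rho=R$ and $\rho=Rr^m$ respectively), and it suffices to establish the single statement $T_{nm,K_\rho}=P^n$ for every $\rho>0$.

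The core step is a lower bound for the supremum norm. Let $Q$ be any monic polynomial of degree $nm$ and consider the rational function $Q/P^n$, which tends to $1$ at $\infty$. Because the zeros of $P$ all lie in $\{|P|<\rho\}$, this function is holomorphic on the closure of the unbounded component $\Omega_\rho$ of $\cj\C\setminus K_\rho$, and $\infty\in\Omega_\rho$. The maximum modulus principle then gives
\[
1=\left|\frac{Q(\infty)}{P(\infty)^n}\right|\le\max_{\partial\Omega_\rho}\frac{|Q|}{|P|^n}=\frac{1}{\rho^n}\max_{\partial\Omega_\rho}|Q|\le\frac{\|Q\|_{K_\rho}}{\rho^n},
\]
where I use $|P|\equiv\rho$ on $\partial\Omega_\rho\subseteq K_\rho$. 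Thus $\|Q\|_{K_\rho}\ge\rho^n$ for every monic $Q$ of degree $nm$. On the other hand $\|P^n\|_{K_\rho}=\rho^n$, again because $|P|\equiv\rho$ on $K_\rho$, so $P^n$ attains the minimum; by uniqueness of the Chebyshev polynomial, $T_{nm,K_\rho}=P^n$. Specializing to $\rho=R$ and $\rho=Rr^m$ then delivers $T_{nm,K}=P^n=T_{nm,L_r}$.

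I expect the only genuine subtlety to be bookkeeping about the topology of the lemniscate: one must ensure that the zeros of $P$ are excluded from $\overline{\Omega_\rho}$ (so $Q/P^n$ has no poles there) and that $\partial\Omega_\rho\subseteq K_\rho$ (so $|P|=\rho$ on it). Both facts follow immediately from $K_\rho=\{|P|=\rho\}$, but they are the hinge on which the maximum modulus estimate turns; everything else is a direct computation. An alternative to the self-contained inequality above would be to invoke the classical bound $\|Q\|_{K_\rho}\ge\operatorname{cap}(K_\rho)^{nm}=\rho^n$ valid for all monic $Q$, but the rational-function argument keeps the proof elementary and avoids appealing to general potential theory.
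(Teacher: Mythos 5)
Your proof is correct and follows exactly the route the paper indicates: the paper omits any proof of this theorem, remarking only that it is ``a simple consequence of the Maximum Modulus Principle,'' which is precisely your core estimate $\|Q\|_{K_\rho}\geq \rho^n$ obtained by applying the maximum principle to $Q/P^n$ at $\infty$. Your preliminary steps (identifying $g_\Omega(z,\infty)=\tfrac1m\log(|P(z)|/R)$, hence $L_r=\{z:|P(z)|=Rr^m\}$, so that $K$ and $L_r$ are lemniscates of the same polynomial) are the standard reduction and are carried out correctly, so there is nothing to flag.
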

The proof is omitted from Faber's original article since it is a simple consequence of the Maximum Modulus Principle. Faber showed that a similar situation arises for ellipses.

\begin{lettertheorem}[Faber 1919 \cite{faber19}]
	For $K=[-1,1]$, the level curves $L_r$ are ellipses with foci at $-1$ and $1$, and  \[T_{n,L_r}(z)=T_{n,[-1,1]}(z)\]
	holds for all $n\in \mathbb{N}$ and $r>1$.
	\label{thm:faber_interval}
\end{lettertheorem}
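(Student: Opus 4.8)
The plan is to exhibit $T:=T_{n,[-1,1]}$ as a monic polynomial of minimal supremum norm on $L_r$ and then appeal to uniqueness of the Chebyshev polynomial. For $K=[-1,1]$ one has $\Omega=\cj{\C}\setminus[-1,1]$ and $\phi(z)=z+\sqrt{z^2-1}$ (with the branch making $\phi(z)\sim 2z$ at $\infty$, consistent with $\operatorname{cap}([-1,1])^{-1}=2$), whose inverse is the Joukowski map $z=\tfrac12(\zeta+\zeta^{-1})$. Substituting $\zeta=re^{i\theta}$ gives $z=\tfrac12(r+r^{-1})\cos\theta+\tfrac{i}{2}(r-r^{-1})\sin\theta$, an ellipse with semi-axes $\tfrac12(r\pm r^{-1})$ and therefore with foci at $\pm1$; this settles the first assertion and also shows that $z=\tfrac12(\zeta+\zeta^{-1})$ maps $\{|\zeta|=r\}$ bijectively onto $L_r$ for $r>1$. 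Finally, \eqref{Chebyshev-poly} can be rewritten as $T(z)=2^{-n}\bigl(\phi(z)^{n}+\phi(z)^{-n}\bigr)$, which exhibits $T$ as a monic polynomial of degree $n$.

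First I would determine the extremal set of $T$ on $L_r$. Since $\phi(z)=re^{i\theta}$ there, $T(z)=2^{-n}(r^ne^{in\theta}+r^{-n}e^{-in\theta})$ and $|T(z)|^2=4^{-n}(r^{2n}+r^{-2n}+2\cos2n\theta)$. Hence $|T|$ attains its maximum $M:=2^{-n}(r^n+r^{-n})$ precisely at the $2n$ points $z_k\in L_r$ with $\theta_k=k\pi/n$, $k=0,\dots,2n-1$, and at these points $T(z_k)=(-1)^kM$. Thus $T$ takes the real values $\pm M$ with alternating sign at $2n$ equally spaced points of $L_r$, the exact analogue of Chebyshev equioscillation on the interval.

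The optimality then follows by a sign-change argument. Suppose, for contradiction, that some monic $P$ of degree $n$ satisfies $\max_{L_r}|P|<M$, and set $D:=T-P$, a polynomial of degree at most $n-1$. At each $z_k$ we have $|P(z_k)|<M=|T(z_k)|$, so $\operatorname{Re}[\overline{T(z_k)}D(z_k)]=M^2-\operatorname{Re}[\overline{T(z_k)}P(z_k)]\ge M^2-M|P(z_k)|>0$; since $\overline{T(z_k)}=(-1)^kM$, this reads $(-1)^k\operatorname{Re}D(z_k)>0$. Now I transfer to the circle: under $z=\tfrac12(\zeta+\zeta^{-1})$ every polynomial of degree at most $n-1$ becomes a Laurent polynomial $\sum_{|j|\le n-1}c_j\zeta^j$, so $u(\theta):=\operatorname{Re}D\bigl(\tfrac12(re^{i\theta}+r^{-1}e^{-i\theta})\bigr)$ is a real trigonometric polynomial of degree at most $n-1$, not identically zero. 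Its values $u(\theta_k)$ alternate in sign across the $2n$ equally spaced nodes $\theta_k$, forcing at least $2n$ sign changes of $u$ over $[0,2\pi)$ and hence at least $2n$ zeros. This contradicts the elementary fact that a nonzero trigonometric polynomial of degree $n-1$ has at most $2(n-1)$ zeros in a period. Therefore no monic $P$ beats $M$, so $T$ has minimal norm on $L_r$, and uniqueness of the Chebyshev polynomial yields $T_{n,L_r}=T=T_{n,[-1,1]}$.

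The decisive step — and the one I expect to be the real obstacle — is the final zero count after transferring to the circle. It is tempting to argue instead by extracting, via integration against $2\cos n\theta$, the two extreme Laurent coefficients produced from $P$ by the substitution $z=\tfrac12(\zeta+\zeta^{-1})$; but this yields only the lossy bound $\max_{L_r}|P|\ge\tfrac{\pi}{4}M$, because $\tfrac{1}{2\pi}\int_0^{2\pi}|\cos n\theta|\,d\theta=\tfrac{2}{\pi}$ rather than $\tfrac12$. Recovering the sharp constant $M$ is precisely what the equioscillation mechanism accomplishes, so the crux is to confirm that the $2n$ sign alternations genuinely exceed the zero budget $2(n-1)$ of a trigonometric polynomial of degree $n-1$.
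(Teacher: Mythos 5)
The paper does not actually prove this statement: Theorem \ref{thm:faber_interval} is quoted from Faber's 1919 article without an argument (the remark about the Maximum Modulus Principle refers to the lemniscate case, Theorem \ref{thm:faber_lemniscate}). So there is no in-paper proof to compare against; judged on its own, your proof is correct and complete, and it is essentially the classical equioscillation argument. The key facts all check out: the Joukowski parametrization $z=\tfrac12(\zeta+\zeta^{-1})$ identifies $L_r$ as the ellipse with semi-axes $\tfrac12(r\pm r^{-1})$, hence foci $\pm\sqrt{a^2-b^2}=\pm 1$; the identity $T(z)=2^{-n}\bigl(\phi(z)^n+\phi(z)^{-n}\bigr)$ gives $|T|^2=4^{-n}(r^{2n}+r^{-2n}+2\cos 2n\theta)$ on $L_r$, so $T$ attains $\pm M$, $M=2^{-n}(r^n+r^{-n})$, alternately at the $2n$ points $\theta_k=k\pi/n$; the Kolmogorov-type step $(-1)^k\operatorname{Re}D(z_k)>0$ is valid because $T(z_k)$ is real; and the transfer of $D$ to a real trigonometric polynomial $u$ of degree at most $n-1$ is legitimate since each $z^m$, $m\le n-1$, pulls back to a Laurent polynomial supported on $|j|\le m$. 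The cyclic sign alternation at $2n$ equally spaced nodes forces at least $2n$ distinct zeros of $u$ in a period, exceeding the budget $2(n-1)$, which is the contradiction you need; your final appeal to uniqueness of the Chebyshev polynomial (stated in the paper's introduction) then correctly yields $T_{n,L_r}=T_{n,[-1,1]}$. Your closing self-assessment is also accurate: the Fourier-coefficient extraction route really does lose the constant $\tfrac{\pi}{4}$, so the equioscillation/zero-count mechanism is the right tool, and you have executed it without gaps.
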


It turns out that this is just a special case of a phenomenon that holds more generally for so-called \emph{period-$n$ sets}. A compact set $K$ is called a \emph{period-$n$ set} if there exists a real polynomial $P$ of degree $n$ and real points $x_0<x_1<\cdots <x_n$ satisfying $P(x_k) = (-1)^{n-k}$ such that
\[
P^{-1}([-1,1]) = K.
\]
We refer the reader to \cite{geronimo-vanassche88, peherstorfer93, totik01} for more on the basic properties of such sets. 

\begin{lettertheorem}[Christiansen, Simon and Zinchenko 2020 \cite{christiansen-simon-zinchenko-IV}]
	If $K$ is a period-$n$ set for some $n$, then \[T_{nm,K}(z) = T_{nm,L_r}(z)\] for every $m\in\N$ and $r>1$.	
	\label{thm:csz_period_n_set}
\end{lettertheorem}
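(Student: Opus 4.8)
The plan is to realize both $K$ and each level curve $L_r$ as polynomial preimages of known sets and to reduce everything to Faber's ellipse result (Theorem~\ref{thm:faber_interval}) by way of a composition formula for Chebyshev polynomials. Write $K=P^{-1}([-1,1])$ with $P$ a real polynomial of degree $n$ and leading coefficient $\gamma$. The standard composition rule for Green functions under polynomial preimages gives $g_\Omega(z,\infty)=\tfrac1n\,g_{\C\setminus[-1,1]}(P(z),\infty)=\tfrac1n\log|P(z)+\sqrt{P(z)^2-1}|$, so $g_\Omega(z,\infty)=\log r$ is equivalent to $g_{\C\setminus[-1,1]}(P(z),\infty)=\log r^n$. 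Hence the equipotential curve is exactly
\[
L_r = P^{-1}(E_{r^n}), \qquad E_R \coloneqq \{w\in\C:\ |w+\sqrt{w^2-1}|=R\},
\]
the ellipse $E_R$ having foci $\pm 1$. Thus $K=P^{-1}([-1,1])$ and $L_r=P^{-1}(E_{r^n})$ are preimages under the \emph{same} $P$; if I can express the degree-$nm$ Chebyshev polynomial of such a preimage through the degree-$m$ Chebyshev polynomial of its image, I will be able to compare the two directly.

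The key step is the following composition lemma: for a compact set $E$ with infinitely many points and a polynomial $P$ of degree $n$ with leading coefficient $\gamma$, one has $T_{nm,\,P^{-1}(E)}(z)=\gamma^{-m}\,T_{m,E}(P(z))$. Set $\Pi(z)\coloneqq\gamma^{-m}T_{m,E}(P(z))$, which is monic of degree $nm$. Because $P$ maps $P^{-1}(E)$ onto $E$, we immediately get $\|\Pi\|_{P^{-1}(E)}=|\gamma|^{-m}\|T_{m,E}\|_E$, so the content is the minimality of $\Pi$. This is where I would do the main work: given any monic $S$ of degree $nm$, let $z_1(w),\dots,z_n(w)$ be the roots of $P(z)-w$ and form the fiber average
\[
G(w)\coloneqq\frac1n\sum_{j=1}^{n}S\big(z_j(w)\big).
\]
Since $z_j(w)\in P^{-1}(E)$ whenever $w\in E$, the triangle inequality yields $\|G\|_E\le\|S\|_{P^{-1}(E)}$. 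On the other hand $G$ is a symmetric function of the roots of $P(z)-w$, hence a genuine polynomial in $w$; tracking the top term (for large $w$ the $n$ roots behave like $\gamma^{-1/n}w^{1/n}$ times the $n$th roots of unity) shows $G$ has degree exactly $m$ with leading coefficient $\gamma^{-m}$. Minimality of the Chebyshev polynomial on $E$ then gives $\|G\|_E=|\gamma|^{-m}\|\gamma^m G\|_E\ge|\gamma|^{-m}\|T_{m,E}\|_E=\|\Pi\|_{P^{-1}(E)}$. Chaining the two inequalities yields $\|S\|_{P^{-1}(E)}\ge\|\Pi\|_{P^{-1}(E)}$, and uniqueness of the Chebyshev polynomial identifies $\Pi$ with $T_{nm,\,P^{-1}(E)}$.

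With the lemma in hand the theorem follows at once. Applying it with $E=[-1,1]$ gives $T_{nm,K}(z)=\gamma^{-m}T_{m,[-1,1]}(P(z))$, and applying it with $E=E_{r^n}$ gives $T_{nm,L_r}(z)=\gamma^{-m}T_{m,E_{r^n}}(P(z))$. Since $E_{r^n}$ is the level curve of $[-1,1]$ at parameter $r^n>1$, Faber's ellipse theorem (Theorem~\ref{thm:faber_interval}) gives $T_{m,E_{r^n}}=T_{m,[-1,1]}$ for every $r>1$. The two right-hand sides therefore coincide, and $T_{nm,K}=T_{nm,L_r}$ for all $m\in\N$ and $r>1$.

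The main obstacle is making the fiber-average argument rigorous: one must check that $\sum_j S(z_j(w))$, \emph{a priori} defined only through the multivalued local inverses of $P$, is globally single-valued and extends across the branch points of $P$ to an honest polynomial in $w$. This follows from the fundamental theorem of symmetric polynomials, since the elementary symmetric functions of the roots of $P(z)-w$ are affine in $w$; one then verifies that the degree and leading-coefficient bookkeeping is exactly as claimed. The surjectivity $P(P^{-1}(E))=E$ underlying the norm identity for $\Pi$ uses only that a nonconstant polynomial is onto $\C$.
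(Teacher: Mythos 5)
Theorem~\ref{thm:csz_period_n_set} is not proved in this paper at all: it is quoted as a known result from \cite{christiansen-simon-zinchenko-IV}, so there is no in-paper argument to compare yours against. Judged on its own merits, your proof is correct and essentially complete. The Green-function identity $g_{\Omega}(z,\infty)=\tfrac{1}{n}\,g_{\mathbb{C}\setminus[-1,1]}(P(z),\infty)$ for polynomial preimages does give $L_r=P^{-1}(E_{r^n})$, and your fiber-averaging proof of the composition lemma $T_{nm,P^{-1}(E)}=\gamma^{-m}\,T_{m,E}\circ P$ is sound: $G(w)=\tfrac1n\sum_j S(z_j(w))$ is a genuine polynomial by the symmetric-function theorem (the coefficients of $P(z)-w$ are affine in $w$), its degree is exactly $m$ with leading coefficient $\gamma^{-m}$ because the power sum $\sum_j z_j(w)^{nm}\sim n(w/\gamma)^m$ while all lower power sums have $w$-degree at most $m-1$, the two norm inequalities chain as claimed (using $P(P^{-1}(E))=E$), and uniqueness of the Chebyshev polynomial identifies $\Pi$ with $T_{nm,P^{-1}(E)}$. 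Combining the lemma for $E=[-1,1]$ and $E=E_{r^n}$ with Faber's Theorem~\ref{thm:faber_interval} then finishes the argument. Two observations worth recording. First, you never use the alternation data $P(x_k)=(-1)^{n-k}$ from the definition of a period-$n$ set; your argument therefore proves the conclusion for an arbitrary polynomial preimage $K=P^{-1}([-1,1])$, which is strictly more general (the alternation condition only serves to force $K\subset\mathbb{R}$). Second, this polynomial-mapping route is the standard one for such statements --- it is the same circle of ideas behind the preimage results in \cite{geronimo-vanassche88, fischer92, totik01} and is close in spirit to the treatment in \cite{christiansen-simon-zinchenko-IV} --- so you should view your lemma as a (correctly re-proved) known result rather than a new device.
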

This result was first established for period-$n$ compact sets consisting of exactly two intervals by Fischer \cite{fischer92}. Yet another regularity result holds true for Julia sets of quadratic polynomials.

\begin{lettertheorem}[Stawiska \cite{stawiska96}]
	Let $K = J_{P_\lambda}$ be the Julia set of the polynomial $P_{\lambda}(z) = (z-\lambda)^2$ for $0\leq \lambda \leq 2$ then 
	\[T_{2^{n},K}(z) = T_{2^n,L_r}(z)\]
	holds for all $n\in\N$ and $r>1$.
	\label{thm:stawiska_julia}
\end{lettertheorem}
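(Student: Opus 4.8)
The plan is to exploit the dynamical self-similarity of the Julia set. Write $Q=P_\lambda$, so $Q(z)=(z-\lambda)^2$, and let $\sigma(z)=2\lambda-z$ be the deck transformation, i.e. the unique nontrivial conformal involution with $Q\circ\sigma=Q$. Take $\Omega$ to be the unbounded component of $\cj{\C}\setminus K$ and, for $\rho\ge1$, set $K_\rho:=\{z: g_\Omega(z,\infty)\le\log\rho\}$, the filled level set; thus $K_1$ is the filled Julia set (with outer boundary $K$, so $T_{N,K_1}=T_{N,K}$), while $\partial K_\rho=L_\rho$ and $T_{N,K_\rho}=T_{N,L_\rho}$ for $\rho>1$. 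Writing $\|f\|_E=\max_{z\in E}|f(z)|$, I would use three standard inputs: the basin of infinity of a monic polynomial obeys the functional equation $g_\Omega(Q(z),\infty)=2\,g_\Omega(z,\infty)$, which gives $Q(K_\rho)=K_{\rho^2}$ and $Q^{-1}(K_{\rho^2})=K_\rho$; and, since $Q$ has real coefficients and $\lambda$ is real, every $K_\rho$ is invariant under complex conjugation and under $\sigma$.

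The heart of the argument is a one-step reduction, valid for $m\ge1$ and $\rho\ge1$:
\[
T_{2m,K_\rho}(z)=T_{m,K_{\rho^2}}\bigl(Q(z)\bigr).
\]
To prove it I would symmetrize. Given any monic $S$ of degree $2m$, the average $\widehat S:=\tfrac12\bigl(S+S\circ\sigma\bigr)$ is again monic of degree $2m$ (the leading terms add, as $(-1)^{2m}=1$) and is $\sigma$-invariant; since the $\sigma$-invariant polynomials are precisely the polynomials in $(z-\lambda)^2=Q(z)$, we may write $\widehat S=p\circ Q$ with $p$ monic of degree $m$. Because $\sigma(K_\rho)=K_\rho$, the triangle inequality yields $\|\widehat S\|_{K_\rho}\le\|S\|_{K_\rho}$. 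Applying this with $S=T_{2m,K_\rho}$ and invoking uniqueness of the Chebyshev polynomial forces $T_{2m,K_\rho}$ to equal its symmetrization, hence to have the form $p\circ Q$. Finally $Q(K_\rho)=K_{\rho^2}$ gives $\|p\circ Q\|_{K_\rho}=\|p\|_{K_{\rho^2}}$ for every monic $p$, so minimizing identifies $p=T_{m,K_{\rho^2}}$ and establishes the displayed identity.

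Iterating the reduction $n$ times collapses the degree to one:
\[
T_{2^n,K_\rho}(z)=T_{1,K_{\rho^{2^n}}}\bigl(Q^{\circ n}(z)\bigr).
\]
It then remains to treat the endpoint. Here $T_{1,E}(z)=z-a$, where $a$ is the centre of the smallest disk enclosing $E$; this centre is the unique minimizer of $a\mapsto\max_{z\in E}|z-a|$ and is therefore fixed by every isometry preserving $E$. Each $K_{\rho^{2^n}}$ is symmetric across the real axis and across the line $\{\operatorname{Re}z=\lambda\}$ (the composition of conjugation with $\sigma$), and the only point fixed by both reflections is $\lambda$. Hence $T_{1,K_{\rho^{2^n}}}(z)=z-\lambda$ for all $\rho,n$, and consequently
\[
T_{2^n,K_\rho}(z)=Q^{\circ n}(z)-\lambda=P_\lambda^{\circ n}(z)-\lambda,
\]
which does not depend on $\rho$. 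Taking $\rho=1$ and $\rho=r$ then gives $T_{2^n,K}=T_{2^n,L_r}$ for every $r>1$.

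I expect the symmetrization lemma to be the main obstacle. Its content is not the algebraic observation that $\sigma$-invariance descends to a polynomial in $Q$, but the extremal step: that averaging over $\sigma$ cannot increase the supremum norm on the $\sigma$-invariant set $K_\rho$, and that uniqueness of the Chebyshev polynomial then upgrades this inequality to genuine $\sigma$-symmetry of the extremizer, so that it actually factors through $Q$. The remaining ingredients—the Green-function functional equation, the behaviour of $Q$ on the filled level sets, and the degree-one endpoint via the symmetry of the circumcentre—are routine once the symmetries are in place.
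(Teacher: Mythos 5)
The paper offers no proof of this statement to compare against: Theorem \ref{thm:stawiska_julia} is quoted from Stawiska \cite{stawiska96} (with the extension to $z^d-a$ credited to \cite{xiao-qiu09}), so your argument can only be measured on its own merits and against the cited literature. It is correct, and it is essentially the standard dynamical argument, reconstructed in full. The key steps all check: the functional equation $g_\Omega(P_\lambda(z),\infty)=2g_\Omega(z,\infty)$ gives $Q(K_\rho)=K_{\rho^2}$ and $K_\rho=Q^{-1}(K_{\rho^2})$; the $\sigma$-invariance of $K_\rho$ is in fact automatic from $K_\rho=Q^{-1}(K_{\rho^2})$ and $Q\circ\sigma=Q$ (the reality of $\lambda$ is only needed for the conjugation symmetry used at the endpoint); the symmetrization $\widehat S=\tfrac12(S+S\circ\sigma)$ is monic precisely because the degree $2m$ is even, and uniqueness of the Chebyshev polynomial legitimately upgrades $\|\widehat S\|_{K_\rho}\le \|S\|_{K_\rho}$ to $\sigma$-invariance of $T_{2m,K_\rho}$, hence to the factorization $p\circ Q$; and the identity $\|q\circ Q\|_{K_\rho}=\|q\|_{K_{\rho^2}}$, valid for every monic $q$ of degree $m$ because $Q(K_\rho)=K_{\rho^2}$ and $q\circ Q$ is again monic, converts minimality of $T_{2m,K_\rho}$ into minimality of $p$, giving $p=T_{m,K_{\rho^2}}$. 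The endpoint is also sound: $T_{1,E}(z)=z-a$ with $a$ the (unique) Chebyshev center, which is fixed by every isometry preserving $E$, and the two reflections available here force $a=\lambda$. Note that your proof delivers more than the stated equality: it produces the closed form $T_{2^n,L_r}(z)=P_\lambda^{\circ n}(z)-\lambda$ for all $r\ge 1$, which in combination with Theorem \ref{main:thm} also identifies the monic Faber polynomial, $\hat F_{2^n}=P_\lambda^{\circ n}-\lambda$. The hypothesis $0\le\lambda\le 2$ enters only through the paper's framework (it makes $J_{P_\lambda}$ connected, so the $L_r$ are Jordan curves); your argument itself uses nothing beyond reality of $\lambda$ and compactness of the level sets. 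One small presentational caveat: the one-step reduction as you state it is special to degree two, since it relies on the existence of the deck involution $\sigma$; this is exactly why the analogue for $z^d-a$ in \cite{xiao-qiu09} symmetrizes over the rotation group instead, and it is worth flagging that your lemma does not generalize to arbitrary monic $Q$.
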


This was further refined in \cite[Theorem 2]{xiao-qiu09} to hold for Julia sets generated by $P(z)=z^d-a$ with $d\geq 2$ and $a\in \C$.

In this paper we demonstrate that the regularity observed in Theorems \ref{thm:faber_lemniscate}, \ref{thm:faber_interval}, \ref{thm:csz_period_n_set}, and \ref{thm:stawiska_julia} always manifests in the limit as $r\to\infty$.

\section{Main result}

  We will formulate our result  solely on  the basis of \eqref{levelcurves-secondrepresentation} and \eqref{eq:conformal_representation-alt}, without reference to a compact set $K$. Our departure point is therefore a function $\phi$ that is analytic in a neighborhood of $\infty$ with a Laurent expansion of the form  
\begin{equation}
	\phi(z) = c z + c_0 + \frac{c_{-1}}{z} + \frac{c_{-2}}{z^2} + \cdots,\qquad c>0.
	\label{eq:conformal_representation}
\end{equation}
For each integer $n \geq 0$, the polynomial part $F_{n}(z)$ of the Laurent expansion at $\infty$ of $\phi(z)^{n}$ is  called \textit{the Faber polynomial} of degree $n$ associated to $\phi$, see \cite{minadiaz06}. We denote by $\hat F_n$ the $n$th monic Faber polynomial, that is, $\hat F_n(z)=c^{-n}F_n(z)$.

The level curves 
	\[
	L_{r}=\{z \in \C:|\phi(z)|=r\} 
	\]
	are all well-defined for $r$ sufficiently large. Let $T_{n,L_r}$ be the Chebyshev polynomial of degree $n$ for $L_{r}$.
The following regularity result was initially suggested by numerical experiments conducted in \cite{hubner-rubin24}.

\begin{theorem}
For all $n \geq 0$,
\begin{equation}
\lim _{r \rightarrow \infty} T_{n,L_r}(z)=\hat F_{n}(z), \qquad z \in \mathbb{C}.
\label{eq:main_eq}
\end{equation}
\label{main:thm}
\end{theorem}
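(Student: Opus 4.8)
The plan is to transplant the extremal problem from the level curve $L_r$ to the circle $\{|w|=r\}$ by means of the local inverse $\psi\coloneqq\phi^{-1}$, which is univalent for large $|w|$ with $\psi(w)=w/c+d_0+d_1/w+\cdots$, and which satisfies $L_r=\psi(\{|w|=r\})$ once $r$ is large. Thus for any monic polynomial $T$ of degree $n$ one has $\|T\|_{L_r}=\max_{|w|=r}|T(\psi(w))|$. The guiding observation is that the Faber polynomial is singled out by the identity $c^n\hat F_n(\psi(w))=F_n(\psi(w))=w^n+E(w)$ with $E(w)=\Ordo(1/w)$: its transplant is $w^n$ plus a purely decaying tail, with \emph{no} intermediate powers $w^{n-1},\dots,w^0$. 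Writing an arbitrary monic $T$ as $T=\hat F_n+Q$ with $Q$ in the space $\caliP_{n-1}$ of polynomials of degree $\le n-1$, and letting $\|Q\|$ denote the largest modulus of a coefficient of $Q$, the theorem reduces to showing that the minimizing $Q$ satisfies $\|Q\|\to 0$.

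First I would record the upper bound coming from extremality: since $\hat F_n$ is monic, $\|T_{n,L_r}\|_{L_r}\le\|\hat F_n\|_{L_r}$, and because $E=\Ordo(1/w)$ a direct estimate gives $c^n r^{-n}\|\hat F_n\|_{L_r}=r^{-n}\max_{|w|=r}|w^n+E(w)|\le 1+\Ordo(r^{-n-1})$. Hence the rescaled norm $c^n r^{-n}\|T_{n,L_r}\|_{L_r}$ exceeds $1$ by at most $\Ordo(r^{-n-1})$; the smallness of this slack is what will be played against the linear gap furnished by strong uniqueness. For the lower bound, set $\Phi\coloneqq c^n T(\psi(\cdot))$ and decompose $r^{-n}\Phi(rW)=W^n+p(W)+e(W)$ for $W\in\T$, where $p\in\caliP_{n-1}$ collects the rescaled non-negative powers contributed by $c^nQ(\psi(w))$, and $e$ collects the negative powers, namely those of $E$ together with the principal part of $c^nQ(\psi(w))$. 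Since negative powers of $w$ are $\Ordo(1/r)$ on $\{|w|=r\}$ and that principal part has coefficients bounded by a constant multiple of $\|Q\|$, one gets $\|e\|_{\T}=\Ordo((1+\|Q\|)r^{-n-1})$. Invoking the key fact that $0$ is the strongly unique best approximation to $W^n$ on $\T$ from $\caliP_{n-1}$ — that is, $\max_{\T}|W^n+p|\ge 1+\gamma_n\|p\|_{\T}$ for some $\gamma_n>0$ and all $p\in\caliP_{n-1}$ — and comparing with the upper bound yields $\gamma_n\|p\|_{\T}\le\|e\|_{\T}+\Ordo(r^{-n-1})=\Ordo((1+\|Q\|)r^{-n-1})$.

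It remains to convert the smallness of $\|p\|_{\T}$ into smallness of the individual coefficients of $Q$, and here lies the main subtlety: the coefficient $q_m$ of $Q$ enters $p$ only at the scale $r^{m-n}$, so the single estimate on $\|p\|_{\T}$ must be unpacked scale by scale rather than all at once. Reading off the $m$th Fourier coefficient of $p$ gives $|\pi_m|\le r^{n-m}\|p\|_{\T}=\Ordo((1+\|Q\|)r^{-m-1})$, where $\pi_m=c^{n-m}q_m+(\text{terms in }q_{m+1},\dots,q_{n-1})$ is the coefficient of $w^m$ in $c^n Q(\psi(w))$, the triangular dependence reflecting that $\psi(w)^j$ has $w$-degree $j$. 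Solving this triangular system from $m=n-1$ downward gives $|q_m|\le C(1+\|Q\|)r^{-(m+1)}$ for each $m$, whence $\|Q\|\le C(1+\|Q\|)r^{-1}$; for $r$ large this forces $\|Q\|=\Ordo(r^{-1})\to 0$, and therefore coefficientwise — hence locally uniform — convergence $T_{n,L_r}\to\hat F_n$.

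I expect the hard part to be exactly this final, self-consistent closing of the loop. The difficulty is that every coefficient of the perturbation sits at its own scale $r^{n-m}$ while strong uniqueness controls the perturbation only in a single norm, so the argument must simultaneously absorb the $\|Q\|$-dependence appearing in $\|e\|_{\T}$ and recover it from the triangular inversion. A merely quadratic lower bound, such as the one the $L^2$ norm on $\T$ would supply, is too weak to run this bootstrap; it is precisely the \emph{linearity} of the strong-uniqueness gap, combined with the tiny $\Ordo(r^{-n-1})$ slack in the upper bound, that allows the estimate $\|Q\|\le C(1+\|Q\|)r^{-1}$ to close.
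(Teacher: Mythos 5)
Your proof is correct, and its skeleton --- transplanting the extremal problem to the circle via $\psi$, exploiting the $\Ordo(r^{-n-1})$ slack in the extremality bound $\|T_{n,L_r}\|_{L_r}\le\|\hat F_n\|_{L_r}$, and playing that slack against Rivlin's strong uniqueness of $0$ as best approximation to $w^n$ on $\T$ --- is exactly the paper's; the genuine difference lies in how the negative powers of the transplant are tamed. The paper expands $T_{n,L_r}$ in the Faber basis, so that $G_n^r(w)=r^{-n}T_{n,L_r}(\psi(rw))$ has $\alpha_k^r r^{k-n}$ as its \emph{exact} coefficient of $w^k$ for $0\le k\le n-1$, with no triangular inversion needed; it then uses the Maximum Modulus Principle to push the norm bound into the annulus $r^{-1}<|w|\le 1$, applies Parseval on circles $|w|=\rho\downarrow r^{-1}$ to bound \emph{every} Laurent coefficient by $(1+Mr^{-n-1})r^k$ unconditionally, discards the deep tail $k\le -n-1$, and absorbs the remaining negative powers $-n\le k\le -1$ by multiplying by $w^n$ and invoking Rivlin at degree $2n$ for the polynomial $\sum_{j=0}^{2n-1}b_{j-n}^r w^j+w^{2n}$. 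You instead work in the monomial basis, bound the principal part of $c^nQ(\psi(w))$ directly via Cauchy estimates at the price of a factor $\|Q\|$, apply Rivlin only at degree $n$ to $w^n+p$, invert a triangular system, and close the self-referential estimate $\|Q\|\le C(1+\|Q\|)r^{-1}$ by a bootstrap --- which is legitimate, since $\|Q\|$ is finite for each fixed $r$ and $C$ is $r$-independent, and you rightly observe that the linearity of the strong-uniqueness gap (an $L^2$ bound would give only a quadratic gap) is what makes the loop close. What the paper's route buys is the absence of any bootstrap and an explicit bound $\max_{z\in L_r}|T_{n,L_r}(z)-\hat F_n(z)|=\Ordo(r^{-1})$ valid for all $r\ge 2$ with computable constants; what yours buys is Rivlin at degree $n$ (strong uniqueness constant $1/n$ rather than $1/(2n)$) and no Faber-basis bookkeeping. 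Note finally that your coefficient bounds $|q_m|=\Ordo(r^{-m-1})$ in fact already imply the paper's stronger conclusion of uniform convergence on the growing curves, since $\max_{z\in L_r}|z^m|=\Ordo(r^m)$, so you lose nothing in strength by your route.
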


The limit holds with respect to any norm on the finite-dimensional space of polynomials of degree at most $n$, as all such norms are equivalent. However, we will establish the stronger result that
\begin{equation}
	\max_{z\in L_r}|T_{n,L_r}(z)-\hat{F}_n(z)| = O(r^{-1})
\end{equation}
as $r\rightarrow \infty$. A visual representation of this convergence is presented in Figure \ref{fig:trajectory} where we illustrate the zeros of the Chebyshev polynomial of degree $21$ corresponding to varying equipotential curves of the Bernoulli lemniscate $K=\{z\in \mathbb{C}: |z^2-1| = 1\}$. Note that the even Chebyshev polynomials can be determined from Theorem \ref{thm:faber_lemniscate} and are given by $T_{2n,K}(z) = (z^2-1)^n$. In particular, they have all their zeros situated at $\pm 1$. It is a result of Ullman \cite{ullman60} that the zeros of the odd Faber polynomials corresponding to the Bernoulli lemniscate lie inside the lemniscate except for the simple zero at the origin.
\begin{figure}[h!]
	\centering
	\includegraphics[width = 0.7\textwidth]{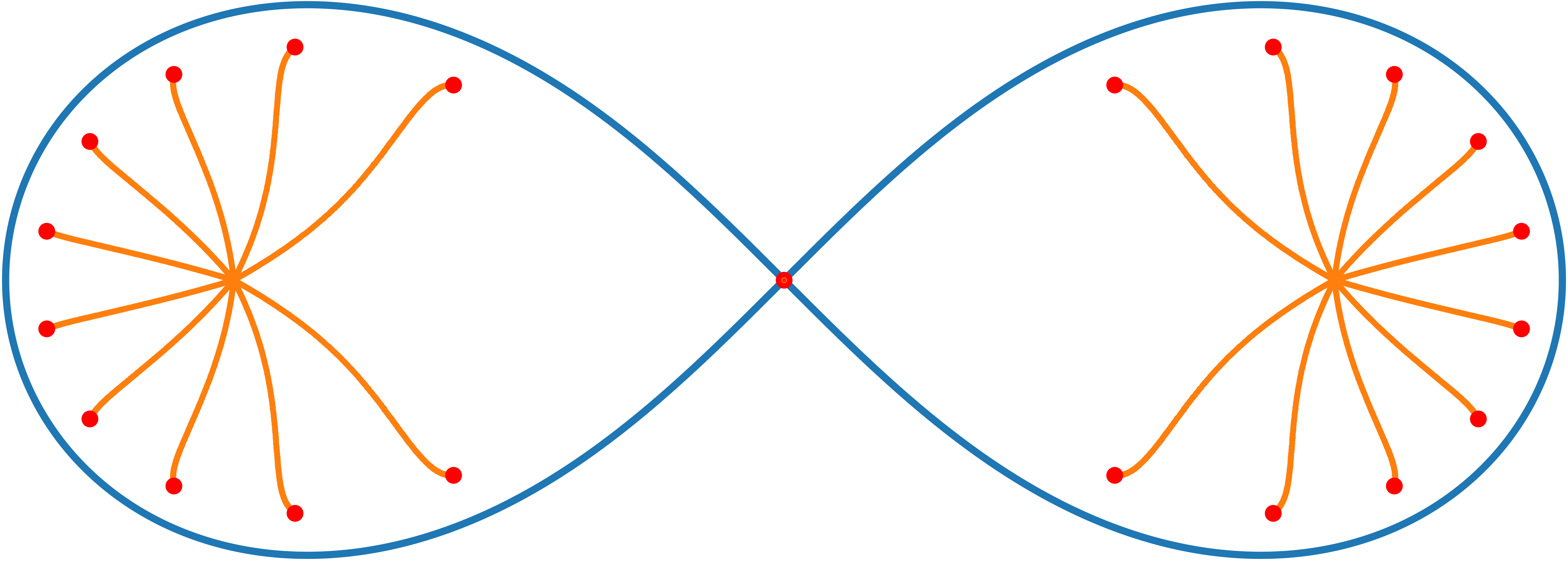}
	\caption{The orange lines illustrate the trajectories of the zeros of the Chebyshev polynomials $ T_{21,L_r} $ corresponding to the level curves $L_r = \{ z \in \mathbb{C} : |z^2 - 1| = r^2 \}$ as $r$ increases from $0$. The red dots mark the zeros of the corresponding Faber polynomial. The Bernoulli lemniscate $L_1\coloneqq \{z\in \mathbb{C}: |z^2-1| = 1\}$ is marked with a solid blue line. The computations were carried out using the algorithm presented in \cite{tang87, tang88}.}
	\label{fig:trajectory}
\end{figure}

We have mentioned examples of compact sets $K$ with the property that for certain values of $n$ (sometimes all of them), $T_{n,L_r}=T_{n,K}$ for all $r>1$. Theorem \ref{main:thm} shows that when this equality happens then necessarily the $n$th Chebyshev polynomial and the $n$th monic Faber polynomial are one and the same for the compact set $K$. Thus, it is natural to pose the following question.
\begin{problem}
Suppose the compact set $K$ is such that for some $n\in \N$, $T_{n,K}=\hat F_n$, where $\hat F_n$ is the $n$th monic Faber polynomial associated to the map $\phi$ that satisfies \eqref{green-function-map}. Is it true then that  $T_{n,L_r}=\hat F_n$ for all $r>1$?	
\end{problem}

\begin{remark}
	While we focus on the case where the degree $n$ of the Chebyshev polynomial is fixed and the equipotential curves vary, there is another classical setting in which the degree varies while the level curve remains fixed. In particular, a connection between the asymptotics of Chebyshev polynomials as $n \to \infty$ and Faber polynomials was established in \cite{faber19}, see also \cite[Theorem~8.3]{widom69}. It is shown that if $r$ is fixed and sufficiently large so that $L_r$ is an analytic Jordan curve, then
	\[
	\lim_{n \to \infty} \left( \frac{c}{r} \right)^{n} \max_{z \in L_r} \left| T_{n, L_r}(z) - \hat{F}_n(z) \right| = 0.
	\]
	Here, the quantity $r/c$ is the logarithmic capacity of $L_r$, and provides a natural normalization.
\end{remark}

The proof of Theorem \ref{main:thm} will make use of the fact that zero is the \emph{strongly unique best approximation} to the monomial $z^n$ on the unit circle by polynomials of degree less than $n$. Let $\mathrm{P}$ denote a finite dimensional subspace of the linear space $C(K)$ of complex-valued continuous functions on a compact set $K\subset \C$, endowed with the supremum norm $\|\cdot\|_K$. Given $f\in C(K)$, an element $p_0\in \mathrm{P}$ is said to be a strongly unique best approximation to $f$ out of $\mathrm{P}$ provided that there exists a constant $\gamma>0$ such that 
\begin{equation}
	\|f-p_0\|_K+\gamma\|p-p_0\|_K\leq\|f-p\|_K,\qquad p\in \mathrm{P}.
	\label{eq:su}
\end{equation}
The \textit{strong uniqueness constant} for a function $f$ with respect to $K$ and $\mathrm{P}$ is the largest possible value of $\gamma$ such that \eqref{eq:su} holds. This constant is given by
\[
\inf_{p \in \mathrm{P} \setminus \{p_0\}} \frac{\|f - p\|_K - \|f - p_0\|_K}{\|p - p_0\|_K};
\]
see \cite{kroo-pinkus10} for a survey on strong unicity.

In \cite{rivlin84}, Rivlin studied the case where $K=\{z:|z|\leq 1\}$ is the closed unit disk, $f$ is the monomial $z^n$, and $\mathrm{P}=\textrm{P}_{n-1}$ is the space of polynomials of degree at most $n-1$ for some $n\in \N$. Observe that since $z^n$ is the Chebyshev polynomial of degree $n$ for the unit circle, the constant zero function is the unique best approximation to $z^n$ from $\textrm{P}_{n-1}$. Rivlin found that this best approximant is actually strongly unique, and that the strong uniqueness constant for $z^n$ is $1/n$. By the Maximum Modulus Principle,  this can be stated as follows:
 \begin{theorem}[Rivlin 1984 \cite{rivlin84}] For every $n\in \N$, 
 \begin{align}\label{Rivlin-ineq-1}
\max_{|z|= 1}|p(z)|\leq n\left(\max_{|z|= 1}|p(z)+z^n|-1\right)	,\qquad p\in \mathrm{P}_{n-1}.
\end{align}
 	\label{thm:Rivlin}
 \end{theorem}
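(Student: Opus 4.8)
The plan is to prove the equivalent strong‑uniqueness estimate $\max_{|z|=1}|p(z)|\le n\bigl(\max_{|z|=1}|z^n+p(z)|-1\bigr)$, and the first move is to trade the modulus on the right for a real part. Writing $\|f\|_{\T}=\max_{|z|=1}|f(z)|$ and using $|\cj z^{\,n}(z^n+p(z))|=|z^n+p(z)|$ together with $|\zeta|\ge\R\zeta$, one gets, for every $z$ on the unit circle,
\[
|z^n+p(z)|\ge\R\bigl[\cj z^{\,n}(z^n+p(z))\bigr]=1+\R\bigl[\cj z^{\,n}p(z)\bigr],
\]
so that $\|z^n+p\|_{\T}-1\ge\max_{|z|=1}\R[\cj z^{\,n}p(z)]$. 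Hence it suffices to prove the homogeneous inequality $\max_{|z|=1}\R[\cj z^{\,n}p(z)]\ge\tfrac1n\|p\|_{\T}$.

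Next I would repackage the left‑hand quantity. On the circle $\cj z=1/z$, so if $p(z)=\sum_{k=0}^{n-1}a_kz^k$ then $\cj z^{\,n}p(z)=\psi(\cj z)$, where $\psi(\zeta)=\sum_{m=1}^{n}a_{n-m}\zeta^m$ is a polynomial with $\deg\psi\le n$ and $\psi(0)=0$. As $z$ runs over the circle so does $\cj z$, whence $\|\psi\|_{\T}=\|p\|_{\T}$ and $\max_{|z|=1}\R[\cj z^{\,n}p(z)]=\max_{|\zeta|=1}\R\,\psi(\zeta)$. The problem is thereby reduced to the clean assertion: for every polynomial $\psi$ with $\deg\psi\le n$ and $\psi(0)=0$,
\[
\|\psi\|_{\T}\le n\max_{|\zeta|=1}\R\,\psi(\zeta).
\]

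To prove this, set $A=\max_{|\zeta|=1}\R\,\psi$. Since $\psi(0)=0$, the real trigonometric polynomial $\R\,\psi$ has mean zero, so $A\ge0$; if $A=0$ then $\R\,\psi\equiv0$ forces $\psi\equiv0$ and there is nothing to prove, so assume $A>0$. Then $v:=A-\R\,\psi$ is a nonnegative trigonometric polynomial of degree at most $n$ with $\frac1{2\pi}\int_0^{2\pi}v=A$, and by the Fejér–Riesz theorem $v=|Q|^2$ on the circle for some algebraic polynomial $Q(\zeta)=\sum_{k=0}^n q_k\zeta^k$ with $\sum_k|q_k|^2=A$. Matching Fourier coefficients in $\R\,\psi=A-|Q|^2$ (the frequency‑$\ell$ part of $|Q|^2$ is $\sum_{k-j=\ell}q_k\cj q_j$) identifies $\psi$ on the circle as $\psi(\zeta)=-2\sum_{0\le j<k\le n}q_k\cj q_j\,\zeta^{k-j}$. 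Writing $r_k=q_k\zeta^k$, so that $\sum_k|r_k|^2=A$ for every $|\zeta|=1$, this reads $|\psi(\zeta)|=2\bigl|\sum_{0\le j<k\le n}r_k\cj r_j\bigr|$, and the whole theorem comes down to the finite inequality $2\bigl|\sum_{0\le j<k\le n}r_k\cj r_j\bigr|\le n\sum_{k=0}^n|r_k|^2$.

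The main obstacle is obtaining the \emph{sharp} constant $n$: estimating the kernel $\sum_{\ell=1}^{n}\zeta^{\ell}$ or each Fourier coefficient of $v$ crudely only yields $2n$, and that stray factor of two is exactly what separates the correct constant $1/n$ from the false $1/2n$. I would recover it by a pairwise argument that exploits the factored form. Choosing the phase $e^{i\phi}$ that makes $e^{i\phi}\sum_{j<k}r_k\cj r_j$ real and nonnegative, each term satisfies $2\R[e^{i\phi}r_k\cj r_j]\le 2|r_k||r_j|\le|r_k|^2+|r_j|^2$, and summing over the $\binom{n+1}{2}$ pairs gives $2\bigl|\sum_{j<k}r_k\cj r_j\bigr|\le\sum_{0\le j<k\le n}(|r_k|^2+|r_j|^2)$. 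The decisive point is the combinatorial bookkeeping: each index $i\in\{0,\dots,n\}$ occurs in exactly $(n-i)+i=n$ of these ordered pairs, so the right‑hand side collapses to $n\sum_k|r_k|^2=nA$, finishing the proof. Equality propagates back through the chain precisely when all $r_k$ coincide, i.e. when $Q$ is a constant multiple of $1+\zeta+\cdots+\zeta^n$ and $v$ is a Fejér kernel; this singles out the extremal polynomial $p(z)=-\tfrac{2}{n+1}\sum_{k=0}^{n-1}(k+1)z^k$ and confirms that the constant $n$ cannot be improved.
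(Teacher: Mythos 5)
Your proof is correct, but there is in fact no in-paper argument to compare it against: the paper states Theorem \ref{thm:Rivlin} as a quoted result and defers entirely to Rivlin's article \cite{rivlin84}, using it as a black box in the proof of Theorem \ref{thm:general}. Your derivation is thus a genuine, self-contained addition, and every step checks out. The pointwise bound $|z^n+p(z)|\ge 1+\R\bigl[\cj{z}^{\,n}p(z)\bigr]$ on $|z|=1$ correctly reduces the theorem to the homogeneous inequality $\|p\|_{\T}\le n\max_{|z|=1}\R\bigl[\cj{z}^{\,n}p(z)\bigr]$; the substitution $\zeta=\cj{z}$ faithfully converts this into the statement $\|\psi\|_{\T}\le n\max_{|\zeta|=1}\R\,\psi(\zeta)$ for polynomials $\psi$ of degree at most $n$ with $\psi(0)=0$; and the condition $\psi(0)=0$ is exactly what makes $\R\,\psi$ have mean zero, so that $v=A-\R\,\psi$ is a nonnegative trigonometric polynomial of degree at most $n$ with mean $A$, to which Fej\'er--Riesz applies. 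The coefficient matching yielding $\psi(\zeta)=-2\sum_{0\le j<k\le n}q_k\cj{q_j}\,\zeta^{k-j}$ is right, and so is the decisive count: each index $i\in\{0,\dots,n\}$ lies in exactly $n$ of the pairs $(j,k)$. Your diagnosis of where the sharp constant comes from is also apt: crude kernel or coefficient estimates lose a factor of $2$, and it is the factorization $v=|Q|^2$, which converts the Fourier data of $v$ into a Gram-type double sum amenable to the pairwise AM--GM bound, that recovers the exact constant $n$ (attained at $r_0=\cdots=r_n$, i.e., the Fej\'er kernel), consistent with Rivlin's value $1/n$ for the strong uniqueness constant.

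One peripheral caveat about your closing remark: the extremal polynomial $p_0(z)=-\frac{2}{n+1}\sum_{k=0}^{n-1}(k+1)z^k$ gives equality in the \emph{homogeneous} inequality ($\|p_0\|_{\T}=n$ while $\max_{|z|=1}\R[\cj{z}^{\,n}p_0(z)]=1$), but in general not in \eqref{Rivlin-ineq-1} itself, because the first step $|\cdot|\ge\R(\cdot)$ has genuine slack at finite scale: at the maximizing points of $\R[\cj{z}^{\,n}p_0(z)]$ the imaginary part of $\cj{z}^{\,n}p_0(z)$ is a generally nonzero cotangent value, and already for $n=2$ one computes $\max_{|z|=1}|z^2+p_0(z)|>2$ while $\|p_0\|_{\T}=2$, so \eqref{Rivlin-ineq-1} is strict for $p_0$. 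Sharpness of the constant $n$ is instead confirmed by the family $tp_0$ as $t\to 0^{+}$, since $\max_{|z|=1}|z^n+tp_0(z)|-1=t+O(t^2)$ while $\|tp_0\|_{\T}=nt$. This affects only your final sentence, not the proof of the stated inequality.
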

This will be a key result in establishing Theorem \ref{main:thm}.

\section{Proofs}
We assume for convenience that  $c=1$ in \eqref{eq:conformal_representation}, that is, 
\begin{equation}
	\phi(z)=z+c_{0}+\frac{c_{1}}{z}+\frac{c_{2}}{z^{2}}+\cdots.
	\label{eq:normalized_conformal_representation}
\end{equation}
The Faber polynomial $F_{n}$ for $\phi$ is then already monic, and by its own definition, we have
\begin{equation}
	F_{n}(z)-\phi(z)^{n}=O(1 / z), \quad z \rightarrow \infty.
	\label{eq:faber_error}
\end{equation}
The function $\phi$ is conformal in a neighborhood of $\infty$. We use the letter  $\psi$ to denote the inverse of $\phi$. Since our interest lies in the behavior of Chebyshev polynomials on $L_r$ for large $r$ we may also assume for convenience that $\psi$ admits a conformal extension to $\Delta_1:=\{z\in \C:|z|>1\}$. Then $\phi$ extends conformally to the domain $\Omega_1=\psi(\Delta_1)$, and the equipotential curves 
\[
L_r:=\{z \in \Omega_1:|\phi(z)|=r\},\qquad r>1 
\]
are all analytic Jordan curves. 

By the defining property of $T_{n, L_r}$, we know that for every $r>1$,
\begin{equation}
\max _{z \in L_{r}}\left|T_{n, L_r}(z)\right| \leq\max _{z \in L_{r}}\left|F_{n}(z)\right|.
\end{equation}
Pick a point $z_{r}$ of $L_{r}$ such that
$$
\left|F_{n}\left(z_{r}\right)\right|= \max _{z \in L_{r}}\left|F_{n}(z)\right|.
$$
From \eqref{eq:normalized_conformal_representation} and \eqref{eq:faber_error} we have as $r \rightarrow \infty$
$$
\frac{F_{n}\left(z_{r}\right)}{\phi\left(z_{r}\right)^{n}}=1+O\left(r^{-n-1}\right).
$$
Therefore, when $r \rightarrow \infty$
\begin{equation}
\max _{z \in L_{r}}\left|r^{-n}T_{n, L_r}(z)\right| \leq 1+O\left(r^{-n-1}\right).
\label{eq:cheb_norm_bound}
\end{equation}
Theorem \ref{main:thm} follows from \eqref{eq:cheb_norm_bound} as a consequence of the following more general statement.
\begin{theorem}Let $n\in \N$ be fixed. Suppose that for every $r>1$, $Q^r_n$ is a monic polynomial of degree $n$ and that there exists a constant $M>0$ such that 
\begin{align} \label{eq:cheb_norm_bound-repeat}
\max _{z \in L_{r}}\left|r^{-n}Q_{n}^{r}(z)\right| \leq 1+Mr^{-n-1}, \qquad r>1.
\end{align}
Then there exists a constant $\tilde M>0$ such that if $r\geq 2$
\begin{equation}
	\max_{z\in L_r}|Q_n^r(z)-F_n(z)|\leq \tilde Mr^{-1}.
	\label{eq:strong_convergence}
\end{equation}
\label{thm:general}
\end{theorem}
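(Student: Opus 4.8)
The plan is to transplant the extremal problem from the curves $L_r$ to the unit circle by means of the inverse map $\psi$, and then apply Rivlin's inequality (Theorem \ref{thm:Rivlin}). The bridge between the two settings is the defining property of the Faber polynomials: composing $F_k$ with $\psi$ and using $\phi(\psi(w))=w$ together with \eqref{eq:faber_error} gives
\[
F_k(\psi(w)) = w^k + E_k(w),
\]
where each $E_k$ is analytic on $\Delta_1$, vanishes at $\infty$, and hence satisfies $|E_k(w)|\le C_k/|w|$ for $|w|\ge 2$. Since $Q_n^r$ and $F_n$ are both monic of degree $n$, I would first expand the difference in the Faber basis, $Q_n^r - F_n = \sum_{k=0}^{n-1}\alpha_k^{(r)}F_k$. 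Substituting $z=\psi(r\zeta)$ with $|\zeta|=1$, so that $z$ sweeps $L_r$, the identity above turns the normalized quantity into
\[
r^{-n}Q_n^r(\psi(r\zeta)) = \zeta^n + P_r(\zeta) + \mathcal E_r(\zeta),
\]
where $P_r(\zeta) := \sum_{k=0}^{n-1}\alpha_k^{(r)}r^{k-n}\zeta^k$ is a polynomial of degree at most $n-1$ and $\mathcal E_r(\zeta) := r^{-n}E_n(r\zeta) + \sum_{k=0}^{n-1}\alpha_k^{(r)}r^{-n}E_k(r\zeta)$ collects the negative powers.

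The core of the argument is to feed $P_r$ into Rivlin's inequality. Writing $P_r+\zeta^n = r^{-n}Q_n^r(\psi(r\zeta)) - \mathcal E_r$ and invoking the hypothesis \eqref{eq:cheb_norm_bound-repeat}, I obtain $\max_{|\zeta|=1}|P_r+\zeta^n| - 1 \le Mr^{-n-1} + \max_{|\zeta|=1}|\mathcal E_r|$, so Theorem \ref{thm:Rivlin} yields
\[
\max_{|\zeta|=1}|P_r| \le n\Bigl(Mr^{-n-1} + \max_{|\zeta|=1}|\mathcal E_r|\Bigr).
\]

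The main obstacle is that $\mathcal E_r$ itself involves the unknown coefficients $\alpha_k^{(r)}$, so this estimate is circular; the resolution is a bootstrap. By Cauchy's estimates the coefficients of $P_r$ are dominated by $\max_{|\zeta|=1}|P_r|$, that is $|\alpha_k^{(r)}r^{k-n}|\le \max_{|\zeta|=1}|P_r|$, whence $|\alpha_k^{(r)}|\le r^{n-k}\max_{|\zeta|=1}|P_r|$. Inserting this together with $|E_k(r\zeta)|\le C_k/r$ into the definition of $\mathcal E_r$ gives $\max_{|\zeta|=1}|\mathcal E_r| \le Cr^{-n-1} + C'r^{-1}\max_{|\zeta|=1}|P_r|$ for suitable constants $C,C'$. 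Substituting back produces $\max_{|\zeta|=1}|P_r|\,(1 - nC'r^{-1}) \le n(M+C)r^{-n-1}$; for all $r$ beyond some $r_0$ the left-hand factor exceeds $1/2$, and I conclude that $\max_{|\zeta|=1}|P_r| = O(r^{-n-1})$ and therefore $|\alpha_k^{(r)}| = O(r^{-1})$.

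Finally I would transfer the estimate back to $L_r$. On $L_r$ we have $Q_n^r(z)-F_n(z) = \sum_k \alpha_k^{(r)}F_k(\psi(r\zeta)) = r^n P_r(\zeta) + \sum_k \alpha_k^{(r)}E_k(r\zeta)$; the first term is $O(r^{-1})$ by the bound on $\max_{|\zeta|=1}|P_r|$, while the second is $O(r^{-2})$ by the bounds on $\alpha_k^{(r)}$ and $E_k$. This establishes \eqref{eq:strong_convergence} for $r\ge r_0$. The remaining range $2\le r\le r_0$ is disposed of at once: there $\max_{z\in L_r}|Q_n^r-F_n|$ stays bounded (the hypothesis bounds $\max_{z\in L_r}|Q_n^r|$, and the curves $L_r$ fill a compact annulus on which $F_n$ is bounded) while $r^{-1}$ is bounded below, so enlarging $\tilde M$ covers this range and completes the proof.
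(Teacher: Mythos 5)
Your proof is correct, and although it shares the paper's two central ideas---expanding $Q_n^r-F_n$ in the Faber basis and transplanting the extremal problem to the unit circle so that Rivlin's inequality (Theorem \ref{thm:Rivlin}) can be applied---the execution is genuinely different. The paper works with the full Laurent expansion of $G_n^r(w)=r^{-n}Q_n^r(\psi(rw))$: it pushes the hypothesis \eqref{eq:cheb_norm_bound-repeat} into the annulus $r^{-1}<|w|\le 1$ via the Maximum Modulus Principle, bounds \emph{all} negative Laurent coefficients by Parseval's formula on circles shrinking to $|w|=r^{-1}$, discards the deep tail (powers below $-n$), multiplies the truncated series by $w^n$ to obtain an honest polynomial of degree $2n$, and applies Rivlin's inequality at degree $2n$, followed by a second application of Parseval to extract $|b_k^r|=O(r^{-n-1})$, i.e.\ $|\alpha_k^r|=O(r^{-k-1})$, uniformly for all $r\ge 2$ in one stroke. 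You instead package the error into the Faber remainders $E_k(w)=F_k(\psi(w))-w^k=O(1/w)$, which lets you apply Rivlin directly at degree $n$ to $P_r+\zeta^n$ and recover coefficients by Cauchy estimates, with no interior extension and no Parseval; the price is that your error $\mathcal{E}_r$ contains the unknown coefficients $\alpha_k^{(r)}$, forcing the absorption/bootstrap step, which closes only for $r\ge r_0$ and obliges you to patch the compact range $2\le r\le r_0$ separately (your patch is sound: the hypothesis bounds $\max_{z\in L_r}|Q_n^r(z)|$ there, $F_n$ is bounded on the compact union $\psi(\{2\le|w|\le r_0\})$, and $r^{-1}$ is bounded below). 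Net comparison: your route is leaner---Rivlin at degree $n$ rather than $2n$, Cauchy rather than Parseval, no maximum-principle step---while the paper's route is non-circular, yields explicit constants valid on all of $r\ge 2$ without case-splitting, and controls the entire Laurent tail of $G_n^r$ as a by-product; both arguments deliver the same intermediate estimate $|\alpha_k^{(r)}|=O(r^{-k-1})$ and hence the same final bound \eqref{eq:strong_convergence}.
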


\begin{proof}
The monic polynomial $Q_n^r$ may be expanded in the Faber basis in the following form 
\begin{align}\label{expansion-Faber}
Q_{n}^{r}(z)=F_{n}(z)+\sum_{k=0}^{n-1} \alpha_{k}^{r} F_{k}(z).
\end{align}
We will begin by proving the existence of a constant $A$ such that  
\begin{align}\label{alphas-coeff-ineq}
|\alpha^r_{k}|\leq Ar^{-k-1}, \qquad 0\leq k\leq n-1
\end{align}
when $r$ is bounded away from $1$, say $r\geq 2$ for convenience.

The function 
\[
G_n^r(w):=r^{-n}Q_n^r(\psi(rw)), \qquad |w|>r^{-1}
\] is analytic on $|w|>r^{-1}$ and has a Laurent expansion of the form
\begin{align*}
G_n^r(w)=\sum_{k=-\infty}^{n-1}b^r_kw^k+w^n,\qquad |w|>r^{-1}.
\end{align*}
Making $z=\psi(rw)$ in \eqref{expansion-Faber} and using \eqref{eq:faber_error} we see that as $w\to\infty$
\[
G_n^r(w)=w^n+\sum_{k=0}^{n-1} \alpha_{k}^{r} r^{k-n}w^k+O(1/w). 
\]
Thus,
\begin{align}\label{equality-alpha-b}
\alpha_k^r=b_k^rr^{n-k},\qquad 0\leq k\leq n-1.
\end{align}

Note that because $Q_n^r$ is a polynomial, the Maximum Modulus Principle implies that the inequality \eqref{eq:cheb_norm_bound-repeat} also holds for $z$ in the interior domain of $L_r$. In particular, we have
\begin{align}\label{ineq-max-princ}
	1< |\phi(z)| \leq r \implies \left|r^{-n}Q_{n}^{r}(z)\right| \leq 1+Mr^{-n-1}. 
 \end{align}

Making the replacement $z=\psi(rw)$ in \eqref{ineq-max-princ} yields
\begin{equation}\label{eq:cheb_norm_bound-repeat1}
\left|G_n^r(w)\right| \leq 1+Mr^{-n-1},\qquad r^{-1}<|w|\leq 1.
\end{equation}
It follows that for $\rho$ in the range $r^{-1}<\rho<1$ we have by Parseval's formula
\begin{align*}
	\sum_{k=-\infty}^{n-1}|b^r_k|^2\rho^{2k}+\rho^{2n}=\frac{1}{2\pi}\int_{0}^{2\pi} |G_n^r(\rho e^{i\theta})|^2d\theta\leq (1+Mr^{-n-1})^2.
\end{align*}
Letting $\rho\to r^{-1}$ we find 
\begin{align*}
	\sum_{k=-\infty}^{n-1}|b^r_k|^2r^{-2k}+r^{-2n}\leq (1+Mr^{-n-1})^2.
\end{align*}
In particular, 
\[
|b^r_k|\leq (1+Mr^{-n-1})r^k, \qquad -\infty<k\leq n-1.
\]
Thus, on the unit circle we have the bound 
\begin{align}\label{bound-circle}
\left|\sum_{k=-\infty}^{-n-1}b_k^rw^k\right|\leq\frac{r^{-n-1}(1+Mr^{-n-1})r}{r-1},\qquad |w|=1.
\end{align}
Making use of  \eqref{eq:cheb_norm_bound-repeat1} and \eqref{bound-circle} we find that with, say $M_1=2+3M$, we have
  \begin{align}\begin{split}
\left|\sum_{j=0}^{2n-1}b_{j-n}^rw^{j}+w^{2n}\right|={} &|w^n|\left|\sum_{k=-n}^{n-1}b_k^rw^k+w^n\right|\\
\leq{} & |G_n^r(w)|+\left|\sum_{k=-\infty}^{-n-1}b_k^rw^k\right|\\
	  \leq {} & 1+M_1r^{-n-1},\qquad |w|=1, \quad r\geq 2. 
	  \end{split}
\end{align}

We now employ Rivlin's Inequality \eqref{Rivlin-ineq-1} to obtain 
\begin{align*}
	\sup_{|w|=1}\left|\sum_{j=0}^{2n-1}b_{j-n}^rw^{j}\right|\leq{} & 2n\left(\sup_{|w|=1}\left|\sum_{j=0}^{2n-1}b_{j-n}^rw^{j}+w^{2n}\right|-1\right)\\
	\leq {} & 2nM_1r^{-n-1}.
\end{align*}	
Thus, for $j=0,1,\ldots,2n-1$, another application of Parseval's formula gives us that 
\begin{align*}
\sum_{k=-n}^{n-1}|b_k^r|^2=\frac{1}{2\pi}\int_{0}^{2\pi}\left|\sum_{j=0}^{2n-1}b_{j-n}^re^{ij\theta}\right|^2d\theta\leq (2nM_1 r^{-n-1})^2,
\end{align*}
which together with \eqref{equality-alpha-b} yields 
\begin{align}
	|\alpha_k^r|=|b_k^rr^{n-k}|\leq 2nM_1 r^{-k-1},\qquad 0\leq k\leq n-1,\quad r\geq 2.
	\label{eq:alpha_estimate}
\end{align}
Having \eqref{alphas-coeff-ineq} proven, we now turn to establishing \eqref{eq:strong_convergence}. From \eqref{eq:faber_error} we see that there exists a constant $M_2$ such that
\[
\max_{z\in L_r}|F_k(z)| \leq r^{k}+ M_2 r^{-1},\qquad 0\leq k\leq n-1,\quad r\geq 2.
\]
 Using \eqref{eq:alpha_estimate} we find that for all $r\geq 2$ and $z\in L_r$ 
\[|Q_n^r(z)-F_n(z)| = \left|\sum_{k=0}^{n-1}\alpha_k^rF_k(z)\right|\leq 2nM_1\sum_{k=0}^{n-1}(r^{-1}+M_2r^{-k-2})\leq 2nM_1\left(n+\frac{M_2}{r-1}\right)r^{-1},
\]
which establishes \eqref{eq:strong_convergence}.
\end{proof}

Theorem \ref{thm:general} is stronger than Theorem \ref{main:thm} in the sense that not only does the limit in \eqref{eq:main_eq} hold locally uniformly, but it actually holds uniformly on the growing level curves $L_r$. Since
\[\max_{z\in L_r}|F_n(z)-\phi(z)^n| = O(r^{-1})\]
as $r\rightarrow \infty$, we can just as well replace $F_n$ by $\phi^n$ in \eqref{eq:strong_convergence}.


\begin{thebibliography}{10}

\bibitem{achieser56}
N.~I. Achieser.
\newblock {\em Theory of Approximation}.
\newblock Frederick Ungar Publishing Co. New York, 1956.

\bibitem{chebyshev54}
P.~L. Chebyshev.
\newblock Th\'{e}orie des m\'{e}canismes connus sous le nom de parall\'{e}logrammes.
\newblock {\em M\'{e}m. des sav. \'{e}tr. pr\'{e}s. \`{a} l'Acad. de. St. P\'{e}tersb.}, 7:539--568, 1854.

\bibitem{christiansen-simon-zinchenko-I}
J.~S. Christiansen, B.~Simon, and M.~Zinchenko.
\newblock Asymptotics of {C}hebyshev polynomials, {I}. subsets of {$\mathbb{R}$}.
\newblock {\em Invent. Math.}, 208:217--245, 2017.

\bibitem{christiansen-simon-zinchenko-IV}
J.~S. Christiansen, B.~Simon, and M.~Zinchenko.
\newblock {Asymptotics of Chebyshev polynomials, IV. Comments on the complex case}.
\newblock {\em JAMA}, 141:207--223, 2020.

\bibitem{faber19}
G.~Faber.
\newblock {\"{U}ber {T}schebyscheffsche Polynome}.
\newblock {\em J. Reine Angew. Math.}, 150:79--106, 1919.

\bibitem{fischer92}
B.~Fischer.
\newblock Chebyshev polynomials for disjoint compact sets.
\newblock {\em Constr. Approx.}, 8(3):309--329, 1992.

\bibitem{garnett-marschall05}
J.~B. Garnett and D.~E. Marshall.
\newblock {\em Harmonic Measure}.
\newblock New Mathematical Monographs. Cambridge University Press, 2005.

\bibitem{geronimo-vanassche88}
J.~S. Geronimo and W.~van Assche.
\newblock Orthogonal polynomials on several intervals via a polynomial mapping.
\newblock {\em Trans. Amer. Math. Soc.}, 308(2):559--581, 1988.

\bibitem{hubner-rubin24}
L.~H{\"{u}}bner and O.~Rubin.
\newblock Computing {C}hebyshev polynomials using the complex {R}emez algorithm.
\newblock {\em \normalfont{\textbf{arXiv}, math.CA}}, 2405.05067, 2024.

\bibitem{kroo-pinkus10}
A.~Kro\'{o} and A.~Pinkus.
\newblock Strong uniqueness.
\newblock {\em Surveys in Approximation Theory}, 5:1--91, 2010.

\bibitem{minadiaz06}
E.~{Mi{\~{n}}a--D\'{i}az}.
\newblock {\em Asymptotics for {F}aber Polynomials and Polynomials Orthogonal over Regions in the Complex Plane}.
\newblock PhD thesis, Vanderbilt University, 2006.

\bibitem{novello-schiefermayr-zinchenko21}
G.~Novello, K.~Schiefermayr, and M.~Zinchenko.
\newblock Weighted {C}hebyshev polynomials on compact subsets of the complex plane.
\newblock In {\em From operator theory to orthogonal polynomials, combinatorics, and number theory}, pages 357--370. Springer, 2021.

\bibitem{peherstorfer93}
F.~Peherstorfer.
\newblock Orthogonal and extremal polynomials on several intervals.
\newblock {\em J. Comput. Appl. Math.}, 48:187--205, 1993.

\bibitem{ransford95}
T.~Ransford.
\newblock {\em Potential Theory in the Complex Plane}.
\newblock London Mathematical Society Student Texts. Cambridge University Press, 1995.

\bibitem{rivlin84}
T.~J. Rivlin.
\newblock The strong uniqueness constant in complex approximation.
\newblock In Peter~Russell Graves-Morris, Edward~B. Saff, and Richard~S. Varga, editors, {\em Rational Approximation and Interpolation}, pages 145--149. Springer Berlin Heidelberg, 1984.

\bibitem{rivlin90}
T.~J. Rivlin.
\newblock {\em Chebyshev Polynomials: from Approximation Theory to Algebra and Number Theory}.
\newblock John Wiley \& Sons, Inc., New York, 1990.

\bibitem{stawiska96}
M.~Stawiska.
\newblock Chebyshev polynomials on equipotential curves of a quadratic {J}ulia set.
\newblock {\em Univ. Iagel. Acta Math.}, 33:191--198, 1996.

\bibitem{tang87}
P.~T.~P. Tang.
\newblock {\em Chebyshev Approximation on the Complex Plane}.
\newblock PhD thesis, University of California at Berkeley, 1987.

\bibitem{tang88}
P.~T.~P. Tang.
\newblock A fast algorithm for linear complex {C}hebyshev approximation.
\newblock {\em Math. Comp.}, 51:721--739, 1988.

\bibitem{totik01}
V.~Totik.
\newblock {Polynomial inverse images and polynomial inequalities}.
\newblock {\em Acta Math.}, 187:139 -- 160, 2001.

\bibitem{ullman60}
J.~L. Ullman.
\newblock Studies in {F}aber polynomials. {I}.
\newblock {\em Trans. Amer. Math. Soc.}, 94:515--528, 1960.

\bibitem{widom69}
H.~Widom.
\newblock Extremal polynomials associated with a system of curves in the complex plane.
\newblock {\em Adv. Math.}, 3:127--232, 1969.

\bibitem{xiao-qiu09}
Y.~Xiao and W.~Qiu.
\newblock Chebyshev polynomials on {J}ulia sets and equipotential curves for the family $p(z) = z^d-c$.
\newblock {\em J. Aust. Math. Soc.}, 86:279--287, 2009.

\end{thebibliography}
\end{document}